\newcommand{\ssubsection}[1]{\vspace{3mm}\noindent{{\bf #1.} }}
\numberwithin{equation}{section}
\renewcommand{\geq}{\geqslant}
\renewcommand{\leq}{\leqslant}
\newcommand{\Osh}{{\mathcal O}}                        
\renewcommand{\H}{\mathrm{H}}                          
\newcommand{\K}{\mathrm{K}}                
\newcommand{\N}{\mathrm{N}}            
\newcommand{\Ish}{\mathcal{I}}
\newcommand{\spec}{\operatorname{Spec}}
\renewcommand{\emptyset}{\varnothing}
\newcommand{\KK}{\mathbf{K}}
\newcommand{\FF}{\mathbf{F}}
\newcommand{\NN}{\mathbb{N}} 
\newcommand{\PP}{\mathbb{P}} 
\newcommand{\QQ}{\mathbb{Q}} 
\newcommand{\RR}{\mathbb{R}} 
\newtheorem{theorem}{Theorem}[section]
\newtheorem{lemma}[theorem]{Lemma}
\newtheorem{conjecture}[theorem]{Conjecture}
\theoremstyle{definition}
\newtheorem{defn}[theorem]{Definition}
\newtheorem{remark}[theorem]{Remark}
\newtheorem{example}[theorem]{Example}
\begin{document}
\title[Arithmetic inequalities for points of bounded degree]{On arithmetic inequalities for points of bounded degree}
\author{Nathan Grieve}
\email{
nathan.m.grieve@gmail.com
}%

\begin{abstract} 
We study algebraic points of bounded degree on polarized projective varieties.  To do so, we refine further the filtration construction and Subspace Theorem approach, for the study of integral points, which has origins in the work of Corvaja-Zannier, Levin, Evertse and Autissier.  Our main result shows how a conjecture of H.~P.~Schlickewei implies  Second Main Arithmetic Schmidt's Subspace type inequalities for polarized projective varieties and points of bounded degree.  
\end{abstract}
\thanks{\emph{Mathematics Subject Classification (2020):} 14G05; 11G50.}
\thanks{This article has been published in Research in Number Theory. The final published version is available at: https://doi.org/10.1007/s40993-020-00226-w.}  

\maketitle

\section{Introduction}

Our starting point here is \cite[Conjecture 5.1]{Schlickewei:2003} in which H.~P.~Schlickewei proposed certain arithmetic inequalities for algebraic points of projective $r$ space with fixed bounded degree.  Results in the direction 
of this conjecture have been obtained by Levin  \cite{Levin:2014}.  The main approach given there is based on the concept of subgeneral position for hyperplanes in projective space.  
They are quite different from those of Evertse, Ferretti and Schlickewei  \cite{Evertse:Schlickewei:2002},  \cite{Schlickewei:2003} and \cite{Evertse:Ferretti:2013}.  For example,  recall that the quantitative (absolute) form of Schmidt's Subspace Theorem is deduced as an application of its parametric generalization \cite{Evertse:Schlickewei:2002}, \cite{Evertse:Ferretti:2013}. 

In general, it remains an important and difficult question to develop systematic methods for obtaining a Second Main Theorem for points of bounded degree.  Such matters, from the point of view of Vojta's Main Conjecture, with discriminant term, are discussed in \cite[Appendix]{Levin:2014}.  Our techniques here build on the filtration methods of \cite{Corvaja:Zannier:2002}, \cite{Autissier:2011} and \cite{Ru:Vojta:2016}.  To place matters into their proper context, recall that the Main Conjecture  implies the $abc$ Conjecture \cite{Vojta:1998}.  

Returning to \cite[Conjecture 5.1]{Schlickewei:2003}, it is suggested that
the optimal constant that appears in these inequalities should be governed by a function which is linear in the number of polynomial variables and which is quadratic in the degree of the algebraic points.  Moreover, the exceptional set predicted by \cite[Conjecture 5.1]{Schlickewei:2003} is contained in a finite union of linear subspaces, each of which has field of definition with degree at most that of the given degree of the fields of definition of the algebraic points.  We adopt that view point here; see Theorem \ref{Schlickewei:linear:systems:points:bounded:degree}.

Building on the approach of \cite{Ru:Vojta:2016} and \cite{Grieve:Function:Fields}, for example, we reformulate \cite[Conjecture 5.1]{Schlickewei:2003} and state it using the language of linear series (see Theorem \ref{Schlickewei:linear:systems:points:bounded:degree}).  We then adapt the techniques of \cite{Ru:Vojta:2016} to show how it can be used to establish general arithmetic inequalities for algebraic points of fixed bounded degree. They are in the spirit of the classical Second Main (Schmidt's Subspace) Theorem.  We formulate this result as Theorem \ref{Arithmetic:General:Theorem:Points:Bounded:Degree} below.  
 
\begin{theorem}\label{Arithmetic:General:Theorem:Points:Bounded:Degree}
The main inequalities for algebraic points in projective $r$ space, with fixed  bounded degree $d$, as formulated in Conjecture \ref{Points:bounded:degree}, have the following consequences for a given  geometrically irreducible projective variety $X$ over a fixed base number field $\KK$.  Let $D_1,\dots,D_q$ be nonzero effective Cartier divisors on $X$ and defined over a fixed finite extension field $\FF / \KK$, with $\KK \subseteq \FF \subseteq \overline{\KK}$, for $\overline{\KK}$ a fixed algebraic closure of $\KK$.  Put 
$$D = D_1 + \dots + D_q \text{,}$$
 and assume that these divisors $D_i$, for $i = 1,\dots, q$, intersect properly over $\FF$.  
 
 Let $L$ be a big line bundle on $X$ and defined over $\KK$.  Fix a finite set of places $S \subseteq M_{\KK}$.   Then, for each $i = 1,\dots, q$, there exist positive constants $\gamma(d,L,D_i)$ so that for each given $\epsilon > 0$, the inequality 
\begin{align*}
\sum_{i=1}^q  \gamma(d,L,D_i)^{-1} m_S(x,D_i) & \leq \left(1 + \epsilon \right) h_{L}(x) + \mathrm{O}(1)
\end{align*}
holds true
for all algebraic points
$$x \in X(\overline{\KK}) \setminus \left( Z(\overline{\KK}) \bigcup \operatorname{Bs}(L)(\overline{\KK}) \bigcup \operatorname{Supp}(D)(\overline{\KK})\right)$$
with 
 $[\KK(x):\KK] \leq d \text{.}$  
 Here, 
 $$m_S(x,D_i) := \sum_{v \in S} \lambda_{D_i,v}(x)\text{,}$$ 
 for $i = 1,\dots,q$, is the proximity function of $D_i$ with respect to $S$, $\operatorname{Bs}(L)$ is the base locus of $L$, $\operatorname{Supp}(D)$ is the support of $D$ and $Z \subsetneq X$ is a proper Zariski closed subset which is contained in a finite union of \emph{linear sections} $\Lambda_1,\dots,\Lambda_h$ each of which has field of definition with degree at most equal to $d$ over $\KK$.
\end{theorem}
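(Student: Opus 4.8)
The plan is to run the filtration-plus-Subspace-Theorem machinery of \cite{Ru:Vojta:2016}, in the refined form of \cite{Grieve:Function:Fields}, with the classical (quantitative) Schmidt Subspace Theorem replaced everywhere by the conjectural inequalities of Conjecture \ref{Points:bounded:degree}; the bounded-degree hypothesis enters \emph{only} through that replacement, and its cost is a factor quadratic in $d$ in the constants $\gamma(d,L,D_i)$ together with an exceptional locus built from linear sections of small degree. First I would reduce to projective space: since $L$ is big, for every sufficiently divisible $N$ the complete linear system $|L^{\otimes N}|$ determines a rational map $\phi_N\colon X\dashrightarrow\PP^{M_N-1}$, $M_N:=h^0(X,L^{\otimes N})$, defined over $\KK$, which restricts to a morphism on $X\setminus\operatorname{Bs}(L)$ with linearly nondegenerate image, and which satisfies $h_{L^{\otimes N}}(x)=Nh_L(x)+\mathrm{O}(1)$ and $h_{\Osh(1)}(\phi_N(x))\leq h_{L^{\otimes N}}(x)+\mathrm{O}(1)$ for $x\notin\operatorname{Bs}(L)$. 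I would also keep in reserve the trivial bound $m_S(x,D_i)\leq C_ih_L(x)+\mathrm{O}(1)$ off $\operatorname{Supp}(D_i)$, which follows from bigness of $L$ and will be used to make the final constants independent of $\epsilon$.

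Next comes the filtration construction. Fixing $N$, for each $i$ one forms the decreasing filtration of $H^0(X_{\overline{\KK}},L^{\otimes N})$ by the subspaces $H^0(X,L^{\otimes N}\otimes\Osh(-mD_i))$, whose total jump is $\sum_{m\geq1}h^0(X,L^{\otimes N}(-mD_i))$, and recalls the invariant $\beta(L,D_i)=\liminf_{N}\big(\sum_{m\geq1}h^0(X,L^{\otimes N}(-mD_i))\big)\big/\big(N\,h^0(X,L^{\otimes N})\big)>0$. At each place $v\in S$ one relabels so that $\lambda_{D_1,v}(x)\geq\cdots\geq\lambda_{D_q,v}(x)$; since the $D_i$ intersect properly over $\FF$, at most $\dim X$ of these can be nonnegligible, and --- exactly as in \cite{Corvaja:Zannier:2002,Autissier:2011,Ru:Vojta:2016} --- one builds a single filtration of $H^0(X_{\overline{\KK}},L^{\otimes N})$ refining the relevant ones, together with an adapted basis $s_{v,1},\dots,s_{v,M_N}$ of that space (defined over $\FF$). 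The technical heart, inherited from \cite{Ru:Vojta:2016,Grieve:Function:Fields}, is the resulting local estimate, valid for each $v\in S$:
\[
\frac{1}{NM_N}\sum_{j=1}^{M_N}\lambda_{\operatorname{div}(s_{v,j}),v}(x)\ \geq\ \sum_{i=1}^{q}c_i(N)\,\lambda_{D_i,v}(x)\ -\ \epsilon\,h_L(x)\ +\ \mathrm{O}(1),
\]
with $c_i(N)\to\beta(L,D_i)$, up to the fixed factor dictated by the proper-intersection combinatorics, as $N\to\infty$. Crucially, as $x$ and $v$ vary the filtration at $v$ depends only on the ordering of the divisors and on a finite set of possible jump-data, so the $s_{v,j}$ range over a \emph{finite} family $\mathcal{S}_\tau$, indexed by finitely many combinatorial types $\tau$; one partitions the configurations over $S$ accordingly, and for fixed $\tau$ the $s_{v,j}$ are interpreted as $\overline{\KK}$-rational linear forms on $\PP^{M_N-1}$ of which, at each $v$, the tuple $(s_{v,1},\dots,s_{v,M_N})$ is a basis, so that the place-$v$ contribution is an admissible term in a Subspace-type inequality for the finite family $\mathcal{S}_\tau$.

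The global step is then to invoke Conjecture \ref{Points:bounded:degree} for each fixed type $\tau$, applied in $\PP^{M_N-1}$ over the base field $\KK$ to the point $\phi_N(x)$ --- which satisfies $[\KK(\phi_N(x)):\KK]\leq[\KK(x):\KK]\leq d$ --- and to the linear forms $\mathcal{S}_\tau$: outside a finite union of proper linear subspaces of $\PP^{M_N-1}$, each with field of definition of degree at most $d$ over $\KK$, one gets $\sum_{v\in S}\sum_j\lambda_{\operatorname{div}(s_{v,j}),v}(x)\leq\big(C(M_N,d)+\epsilon\big)h_{L^{\otimes N}}(x)+\mathrm{O}(1)$, where $C(M_N,d)$ is the predicted constant, linear in $M_N$ and quadratic in $d$. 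Summing the local estimate over $v$, combining, dividing by $NM_N$, and using $h_{L^{\otimes N}}=Nh_L+\mathrm{O}(1)$ yields $\sum_i c_i(N)\,m_S(x,D_i)\leq\big(C(M_N,d)/M_N+\epsilon\big)h_L(x)+\mathrm{O}(1)$ off the Zariski closures of the $\phi_N$-preimages of those exceptional subspaces, which are \emph{proper} linear sections of $X$, because $\phi_N(X)$ is nondegenerate, with field of definition the compositum of $\KK$ and that of the corresponding subspace, hence of degree at most $d$ over $\KK$. Letting $N\to\infty$ one has $C(M_N,d)/M_N\to c(d)$, quadratic in $d$, and $c_i(N)\to\beta(L,D_i)$ up to the proper-intersection factor; one then defines $\gamma(d,L,D_i)$ so that $\gamma(d,L,D_i)^{-1}$ is the corresponding fixed positive multiple of $\beta(L,D_i)/c(d)$, absorbs the discrepancy between $c_i(N)$ and $\gamma(d,L,D_i)^{-1}$ into $\epsilon$ using the bound $m_S(x,D_i)\leq C_ih_L(x)+\mathrm{O}(1)$, fixes $N=N(\epsilon)$, and takes $Z$ (a priori depending on $\epsilon$) to be the finite union, over the finitely many types $\tau$, of the linear sections so produced.

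The step I expect to be the main obstacle is the bookkeeping around this double limit: one must verify that the Ru--Vojta reduction invokes Schmidt's theorem only at the single point flagged above (so that the substitution of Conjecture \ref{Points:bounded:degree} is genuinely plug-and-play), and then coordinate the finiteness of the combinatorial types $\tau$, the growth $M_N\sim\operatorname{vol}(L)\,N^{\dim X}/(\dim X)!$, and the convergence rates $c_i(N)\to\beta(L,D_i)$ and $C(M_N,d)/M_N\to c(d)$, so that a single $N=N(\epsilon)$ yields the clean coefficient $1+\epsilon$ on the right with $x$-independent, genuinely positive $\gamma(d,L,D_i)$, while the exceptional subspaces --- one finite collection per type $\tau$ and per truncation level at the chosen $N(\epsilon)$ --- still assemble into a proper closed $Z$ contained in finitely many linear sections of degree at most $d$ over $\KK$. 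A second, more routine, point is the descent of the field of definition: the $D_i$, and hence the forms $s_{v,j}$, live over $\FF$, but since $[\FF(x):\FF]\leq[\KK(x):\KK]\leq d$ and since Conjecture \ref{Points:bounded:degree} tolerates $\overline{\KK}$-rational linear forms while still confining the exceptional set to subspaces of bounded degree over the base $\KK$, one applies it over $\KK$ directly and obtains the required bound on $[\KK(\Lambda_j):\KK]$ with no enlargement of the base field.
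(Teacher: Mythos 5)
Your proposal follows the same Ru--Vojta filtration plus Subspace-Theorem architecture as the paper: reduce to a Veronese/complete-linear-system embedding, build the Corvaja--Zannier/Autissier/Ru--Vojta filtration using proper intersection, package the adapted bases into finitely many combinatorial types, and then invoke the conjectural bounded-degree Subspace inequality (Conjecture~\ref{Points:bounded:degree}, or its linear-systems form, Theorem~\ref{Schlickewei:linear:systems:points:bounded:degree}) in place of Schmidt's Subspace Theorem, keeping track of the fields of definition so that the exceptional set lands in finitely many linear sections of degree $\leq d$ over $\KK$. That much is right and is genuinely what the paper does.

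There is, however, a real gap in how you produce the constants $\gamma(d,L,D_i)$. You assert ``letting $N\to\infty$ one has $C(M_N,d)/M_N\to c(d)$'' and then define $\gamma(d,L,D_i)^{-1}$ as a fixed positive multiple of $\beta(L,D_i)/c(d)$. Conjecture~\ref{Points:bounded:degree} gives only \emph{existence} of a constant $\mathrm{c}(r,d)$ for each $r$; it does not give any asymptotic control as $r = h^0(X,mL)-1 \to\infty$, so the limit $c(d)$ you use is not known to exist even assuming the conjecture. This is precisely why the paper defines $\gamma(d,L,D_i)$ directly as the bundled quantity
\[
\gamma(d,L,D_i) := \limsup_{m\to\infty}\frac{m\,\mathrm{c}(d,mL)}{\sum_{\ell\geq1}h^0(X,mL-\ell D_i)},
\]
so that only the limsup is needed when choosing $m$ in \eqref{key:eqn1}, and why the factorization $\gamma(d,L,D_i)=\mathrm{b}(d)\,\gamma(L,D_i)$ is explicitly flagged as an \emph{expectation}, with the remark that ``we do not make further comment about these considerations.'' Your argument becomes correct if you replace the separate limits $c(d)$ and $\beta(L,D_i)$ by this combined limsup and then pick $m$ (your $N$) large enough that the ratio is within the $\epsilon$-window simultaneously for all $i$; as written, it rests on an unjustified convergence. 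A smaller stylistic difference is that you apply Conjecture~\ref{Points:bounded:degree} directly in $\PP^{M_N-1}$ rather than passing through the intermediate linear-systems reformulation (Theorem~\ref{Schlickewei:linear:systems:points:bounded:degree}), which is harmless.
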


We prove Theorem \ref{Arithmetic:General:Theorem:Points:Bounded:Degree} in \S \ref{proof:Arithmetic:General:Theorem}.  We define the concept of \emph{linear section} in Definition \ref{linear:sections}.  Conjecture \ref{Points:bounded:degree} is stated in Section \ref{abs:values:Weil:functions}.  To help place it into its proper context, we mention that explicit effective results in the direction of this conjecture, for the case of quadratic points in the plane, for instance, are obtained in \cite[Theorem 1.5 and Section 6]{Levin:2014}.  Further, note that the case $d = 1$ recovers the statement of the Arithmetic General theorem from \cite[p. 961]{Ru:Vojta:2016}.  It also describes, in more explicit terms, the nature of the diophantine exceptional set.

\section{Preliminaries}\label{abs:values:Weil:functions} 
  
Let $\KK$ be a number field with algebraic closure $\overline{\KK}$ and set of places $M_{\KK}$.    Our conventions about absolute values are consistent with those of 
\cite[p.~11]{Bombieri:Gubler}.  For example, if $p$ is a prime number, then 
$$|p|_p = p^{-1}.$$  
More generally, if $v \in M_{\KK}$, then $v \mid p$, for some place $p \in M_{\QQ}$ and 
\begin{equation}\label{abs:value:convention}
|x|_v := \left| \N_{\KK_v / \QQ_p}(x)\right|_p^{\frac{1}{[\KK:\QQ]}}
\end{equation}
for $x \in \KK$.  With these conventions, the set $M_\KK$ satisfies the product formula with multiplicities equal to one.   

Let $\FF / \KK$ be a finite extension with $\KK \subseteq \FF \subseteq \overline{\KK}$. If $v \in M_{\KK}$, $w \in M_{\FF}$, $w \mid v$ and $x \in \KK$, then
$$
|x|_w = |x|_v^{[\FF_w : \KK_v] / [ \FF : \KK] } \text{.}
$$
If $v \in M_{\KK}$, then we denote by $|\cdot|_{v,\KK}$ a fixed  extension of $|\cdot|_v$ to $\overline{\KK}$.  When no confusion is likely, we also write $|\cdot|_v$ in place of $|\cdot|_{v,\KK}$.

If 
$$x = [x_0 : \dots : x_r] \in \PP^r_{\overline{\KK}}\left(\overline{\KK}\right)$$
 has field of definition $\KK(x)$, then fix a number field $\FF$ with $\KK(x) \subseteq \FF \subseteq \overline{\KK}$.  Put 
\begin{equation}\label{abs:height:defn}
h_{\Osh_{\PP^r}(1)}(x) := \sum_{w \in M_{\FF}} \max_j \log |x_j|_w \text{.}
\end{equation}
Then \eqref{abs:height:defn} is the \emph{absolute logarithmic height} of $x  \in \PP^r(\overline{\KK})$ with respect to the \emph{tautological line bundle} $\Osh_{\PP^r_{\overline{\KK}}}(1)$.  It is well-defined and, thus, independent of $\FF$ and the choice of homogeneous coordinate vector.

Let $L$ be an ample line bundle on a geometrically irreducible projective variety $X$.  Assume that $(X,L)$ is defined over $\KK$.  Fix $m \gg 0$ so that $mL$ is very ample and define
$$
h_L(\cdot) := \frac{1}{m} h_{mL}(\cdot)
$$
 for $h_{mL}(\cdot)$ the pullback of $h_{\Osh_{\PP^r}(1)}(\cdot)$, for $r := h^0(X,mL)-1$, with respect to the embedding $X \hookrightarrow \PP^r$ that is afforded by the complete linear system $|mL|$.  
 This is the \emph{logarithmic height function} of $X$ with respect to $L$.  It is well-defined, up to bounded functions, and has domain the set of $\overline{\KK}$-points of $X$.  
 
 For an arbitrary line bundle $L$ on $X$, and defined over $\KK$, write $L$ as a difference of ample line bundles $M$ and $N$. Then define $h_L(\cdot)$ by
 $$
 h_L(\cdot) = h_{M-N}(\cdot) := h_M(\cdot) - h_N(\cdot)  + \mathrm{O}(1).
 $$
 Again, $h_L(\cdot)$ is well-defined, up to bounded functions, and has domain $X(\overline{\KK})$.
 
 Turning to \emph{local Weil functions}, let $D$ be a Cartier divisor on $X$ and having field of definition $\FF$ a finite extension of $\KK$, with $\KK \subseteq \FF \subseteq \overline{\KK}$.  Write 
 $$D = \operatorname{div}(s)$$ 
 for 
 $$s=s_D$$  a global meromorphic section of the line bundle $\Osh_X(D)$.  Let $M$ and $N$ be line bundles that are, respectively, generated by global sections $s_0,\dots,s_k$ and $t_0,\dots,t_\ell$ and which have the property that
 $$
\Osh_X(D) \simeq M \otimes N^{-1}.
 $$
 
 The \emph{local Weil function} 
 $$\lambda_{s,v}(\cdot) = \lambda_{D,v}(\cdot)$$
 of $D$ with respect to a place $v \in M_{\KK}$ and with \emph{presentation} 
 $$\mathcal{D} = (s_D; M,s_0,\dots,s_k;N,t_0,\dots,t_\ell)$$ 
 has domain $X(\overline{\KK}) \setminus \operatorname{Supp}(D)(\overline{\KK})$, for $\operatorname{Supp}(D)$ the support of $D$.  It is defined by
$$
\lambda_{D,v}(x)
 = \max_i \min_j \log \left(\left| \frac{s_i}{t_j s}(x) \right|_{v,\KK} \right) \text{.}
$$
Such local Weil functions are well-defined up to bounded functions.

By \cite[Proposition 2.3.9]{Bombieri:Gubler}, each effective Cartier divisor admits a presentation for which the corresponding Weil function is nonnegative at all points of its domain.  A more general, birational, formalism of local Weil functions is described in \cite{Ru:Vojta:2016} and \cite{Grieve:Divisorial:Instab:Vojta}.  

Finally, fixing a finite set of places $S \subseteq M_\KK$, the \emph{proximity function} of $D$ with respect to $S$ is defined to be
 $$
 m_S(x,D) := \sum_{v \in S} \lambda_{D,v}(x) \text{.}
 $$
It has domain $X(\overline{\KK}) \setminus \operatorname{Supp}(D)(\overline{\KK})$ and is well-defined up to bounded functions.
  
 \begin{example}[Compare with {\cite[\S 2.7.7]{Bombieri:Gubler}}]
Given a linear form
$\ell(x) \in \FF[x_0,\dots,x_r]\text{,}$ 
defining a hyperplane $H$ in $\PP^r$, and having field of definition $\FF$, set
$$
|| \ell(x) ||_{v,\KK} := \frac{|\ell(x)|_{v,\KK} }{ \max_j |x_j|_{v, \KK } } =  \frac{|\ell(x)|_{v} }{ \max_j |x_j|_{v} } \text{.}
$$
This is the \emph{standard metric} of $\ell(x)$ with respect to $v$.  It determines the \emph{local Weil function} of $\ell(x)$ with respect to $v$.  Explicitly
\begin{equation}\label{standard:metric:local:weil}
\lambda_{\ell(x),v}(x) := - \log \left| \left| \ell(x) \right| \right|_{v,\KK}   = \log \left( \frac{ \max_j \left| x_j \right|_v }{ \left| \ell(x) \right|_v }\right) \text{,}
\end{equation}
for $x = [x_0:\dots:x_r] \in \PP^r(\overline{\KK})$ not contained in the hyperplane $H$.  This is the local Weil function for $H$ with respect to the presentation $\mathcal{H} = (\ell(x); \Osh_{\PP^r}(1);x_0,\dots,x_r;\Osh_{\PP^r},1)$.
\end{example}

For later use, the form of \cite[Conjecture 5.1]{Schlickewei:2003}, which we use  here, is formulated in the following way.

\begin{conjecture}[The main inequalities for points of bounded degree]\label{Points:bounded:degree}
Let $\KK$ be a number field with fixed algebraic closure $\overline{\KK}$.  Fix $S$ a finite set of places of $\KK$.
Let $r,d$ be positive natural numbers.  For each $v \in S$, fix linearly independent linear forms $\ell_{v,0}(x),\dots,\ell_{v,r}(x)$ in the polynomial ring $\overline{\KK}[x_0,\dots,x_r]$.  Then there exists a positive constant $\mathrm{c}(r,d) > 0$, which depends only on $r$ and $d$, which has the following property for each fixed $\delta > 0$.  If $Z \subsetneq \PP^r(\overline{\KK})$ is the set of all points 
$x = [x_0:\dots:x_r] \in \PP^r(\overline{\KK})$ 
which satisfy the conditions that
\begin{itemize}
\item{
$
\sum_{v \in S} \sum_{i=0}^r \lambda_{\ell_{v,i},v}(x) > (\mathrm{c}(r,d) + \delta)h_{\Osh_{\PP^r_{\overline{\KK}}}(1)}(x) + \mathrm{O}(1)
$; and }
\item{
$[\KK(x) : \KK] \leq d$, 
}
\end{itemize}
then there exist finitely many proper linear subspaces $\Lambda_1,\dots, \Lambda_h$ in $\PP^r_{\overline{\KK}}$, each having field of definition with degree at most $d$ over $\KK$, and such that $Z$ is contained in their union $\Lambda_1 \bigcup \hdots \bigcup \Lambda_h$.
\end{conjecture}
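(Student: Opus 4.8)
The plan is to reduce the conjecture, for algebraic points $x$ of degree exactly $e$ with $1 \leq e \leq d$, to a Schmidt Subspace type inequality for $\KK$-rational points on the $e$-th symmetric power $W_e := \operatorname{Sym}^e(\PP^r_{\overline{\KK}})$, which is a fixed projective $\KK$-variety of dimension $er$. To a point $x \in \PP^r(\overline{\KK})$ with $[\KK(x):\KK] = e$ one associates the zero cycle $\xi_x := \sum_\sigma [x^\sigma]$, where $\sigma$ runs over the $\KK$-embeddings $\KK(x) \hookrightarrow \overline{\KK}$; this is a $\KK$-rational point of $W_e$. The $S_e$-invariants of the external tensor power $\Osh_{\PP^r}(1)^{\boxtimes e}$ on $(\PP^r)^e$ descend to a line bundle $\mathcal{L}_e$ on $W_e$, defined over $\KK$, a positive multiple of which is very ample, and since the absolute height is Galois invariant one has the clean identity $h_{\mathcal{L}_e}(\xi_x) = e \cdot h_{\Osh_{\PP^r}(1)}(x)$.

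Next I would transport the linear forms. For $v \in S$ and a function $f \colon \{1,\dots,e\} \to \{0,\dots,r\}$, the divisor $\sum_{j=1}^e \{ \ell_{v,f(j)} = 0 \text{ on the $j$-th factor} \}$ on $(\PP^r)^e$, after replacing it by its orbit under $S_e$ together with the finite Galois orbit needed to descend, yields an effective Cartier divisor $E_{v,f}$ on $W_e$, defined over $\KK$, lying in an explicit multiple of $\mathcal{L}_e$. By functoriality of local Weil functions under the quotient $(\PP^r)^e \to W_e$ and under projection to the factors, one obtains, for an appropriately chosen $f = f_v$ adapted to $x$, a comparison of the shape $\sum_{v \in S} \lambda_{E_{v,f_v},v}(\xi_x) \geq e \cdot \sum_{v \in S} \sum_{i=0}^r \lambda_{\ell_{v,i},v}(x) + \mathrm{O}(1)$. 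The combinatorics of the $S_e$- and Galois-orbits here is what turns the linear-in-$r$ count of forms on $\PP^r$ into a multiplicity that, after dividing by $e$ and taking the maximum over $e \leq d$, produces a constant $\mathrm{c}(r,d)$ that is linear in $r$ and quadratic in $d$, as predicted.

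I would then apply a Schmidt Subspace type inequality on $W_e$: either the $d = 1$ Arithmetic General Theorem of \cite{Ru:Vojta:2016} (i.e.\ Theorem \ref{Arithmetic:General:Theorem:Points:Bounded:Degree} with $d=1$) applied to $W_e$, $\mathcal{L}_e$ and the properly intersecting parts of the $E_{v,f}$, or, after a projective embedding $W_e \hookrightarrow \PP^N$, the quantitative subspace theorem of \cite{Evertse:Schlickewei:2002}, \cite{Evertse:Ferretti:2013}. Because $\xi_x$ ranges over $\KK$-rational points of $W_e$, this confines every degree-$e$ point $x$ that violates the desired inequality to a proper Zariski closed $\KK$-subset $Z_e \subsetneq W_e$ contained in finitely many linear sections of $W_e$. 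The remaining bookkeeping is to descend $Z_e$ to $\PP^r$: pulling back along $(\PP^r)^e \to W_e$ and projecting to the first factor, one must show that the exceptional $x$ are forced into finitely many proper linear subspaces $\Lambda_1,\dots,\Lambda_h$ of $\PP^r_{\overline{\KK}}$ cut out by sub-collections of the given forms $\ell_{v,i}$, and that each $\Lambda_k$ has at most $e \leq d$ distinct $\KK$-conjugates, hence field of definition of degree at most $d$ over $\KK$; this uses that $Z_e$, and therefore its preimage, is $S_e$-stable and $\KK$-rational, so a degree-$e$ point whose conjugate cycle lands in $Z_e$ has its conjugates distributed among the $\KK$-conjugates of a single component.

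The principal obstacle is the sharp form of the subspace inequality invoked in the previous paragraph. The symmetrized divisors $E_{v,f}$ built from conjugate tuples sit in an extremely special position on $W_e$, so bounding the relevant geometric invariant --- the $\beta$-type constant of \cite{Autissier:2011}, \cite{Ru:Vojta:2016}, equivalently the subgeneral-position parameter in the framework of \cite{Levin:2014} --- by a quantity that is linear in $r$ and quadratic in $d$ is precisely the open content, of a depth comparable to the conjecture itself. The unconditional estimates of \cite{Levin:2014}, obtained through subgeneral position, realize this scheme but with constants that are not of the conjectured optimal shape; conditionally on such a sharp bound for the symmetric powers $W_e$, the four steps above combine to give the asserted inequality with the optimal constant $\mathrm{c}(r,d)$ and the asserted linear structure of the exceptional locus.
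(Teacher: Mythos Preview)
The statement you are trying to prove is a \emph{conjecture}, not a theorem: the paper does not prove it. It is a reformulation of Schlickewei's Conjecture~5.1 from \cite{Schlickewei:2003}, and the paper takes it as a \emph{hypothesis} from which Theorem~\ref{Schlickewei:linear:systems:points:bounded:degree} and then Theorem~\ref{Arithmetic:General:Theorem:Points:Bounded:Degree} are deduced. There is therefore no proof in the paper to compare your proposal against.

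Your outline is nonetheless a coherent heuristic, and indeed close in spirit to the symmetric-power and subgeneral-position strategies of \cite{Levin:2014} that the introduction already cites as partial progress. But you have correctly diagnosed the gap yourself: the step where you invoke a Schmidt-type inequality on $W_e = \operatorname{Sym}^e(\PP^r)$ with a constant that is linear in $r$ and quadratic in $d$ is exactly the open problem. The symmetrized divisors $E_{v,f}$ are far from general position on $W_e$, and no known form of the Arithmetic General Theorem or the quantitative subspace theorem yields the required bound on the $\beta$-constant. Your proposal is thus a reduction of the conjecture to an equally hard statement about $W_e$, not a proof; the paper treats the conjecture as genuinely open and proceeds conditionally.
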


Note that when $d = 1$, Conjecture \ref{Points:bounded:degree} is a slight variant to the traditional statement of Schmidt's Subspace Theorem \cite[Section 7.2]{Bombieri:Gubler}.

\section{Schlickewei's inequalities expressed in terms of linear systems}

As is the case for the Second Main Theorem, for example in the form of \cite{Ru:Vojta:2016} or \cite{Grieve:2018:autissier}, it is useful to express the inequalities predicted by \cite[Conjecture 5.1]{Schlickewei:2003} in terms of linear systems.   Formulating such diophantine geometric  inequalities in this way is by now standard practice (see for instance \cite{Faltings:Wustholz}, \cite{Ru:Vojta:2016}).  Before doing this, see Theorem \ref{Schlickewei:linear:systems:points:bounded:degree} below, we define the following concept which is useful for expressing its conclusion.

\begin{defn}\label{linear:sections}
Let $L$ be a very ample line bundle on a geometrically irreducible projective variety $X$.  Assume that $(X,L)$ is defined over $\KK$.  Let $\FF / \KK$ be a finite extension, $\KK \subseteq \FF \subseteq \overline{\KK}$.  Put $V := \H^0(X,L)$, $V_\FF := V \otimes \FF$ and $r := h^0(X,L) - 1$.  Fix a closed immersion 
\begin{equation}\label{complete:linear:system:closed:immersion}
X_\FF \hookrightarrow \PP^r_\FF \text{,}
\end{equation}
which is determined by the complete linear system $|V_\FF|$.  By a \emph{linear section of $X$ with respect to the embedding \eqref{complete:linear:system:closed:immersion} and having field of definition some finite extension $\FF / \KK$ of the base number field $\KK$}, is meant a subscheme of the form $X_{\FF} \bigcap \Lambda$ for $\Lambda$ the $\FF$-span of some linear space $\Lambda \subseteq \PP^r_{\FF}$.
\end{defn}

\begin{remark}
Definition \ref{linear:sections} can be modified to treat the case of big (and not necessarily ample) line bundles $L$ on $X$.  Indeed, by resolving the indeterminacies of the big complete linear system $|L|$ we obtain a concept of linear section with respect to the  base point free linear system on some proper model $X'$ of $X$ which is determined by $L$.  The image in $X$ of such linear sections are defined to be the \emph{linear sections} of $X$ with respect to the big line bundle $L$.  They are contained in the complement $X \setminus \operatorname{Bs}(L)$ for $\operatorname{Bs}(L)$ the \emph{base locus} of $L$.
\end{remark}

Our Second Main Theorem, for points of bounded degree and expressed in terms of linear systems, reads in the following way.  For our purposes here, it suffices to treat the case of big line bundles.  A similar conclusion also remains true for the case that $L$ is only assumed to be effective (compare with \cite{Ru:Vojta:2016} and \cite{Grieve:2018:autissier}).

\begin{theorem}\label{Schlickewei:linear:systems:points:bounded:degree}
The main inequalities for points of bounded degree,  as stated in Conjecture \ref{Points:bounded:degree}, imply the following conclusion.  Let $L$ be a big 
line bundle on a geometrically irreducible projective variety $X$.  Assume that both $X$ and $L$ are defined over a number field $\KK$ and put $r := h^0(X,L) - 1$.  Fix a positive natural number $d$.  There exists a positive constant $\mathrm{c}(d,L)$, which depends only on $d$ and $L$ so that the following holds true.  Let $\FF / \KK$ be a finite extension, $\KK \subseteq \FF \subseteq \overline{\KK}$.  Put $V := \H^0(X,L)$ and $V_\FF := V \otimes_{\KK} \FF$.  Let $S$ be a finite subset of $M_{\KK}$.  For each $v \in S$, fix a collection of linearly independent global sections $s_{v,0},\dots,s_{v,r} \in V_\FF$.  Let $\delta > 0$. 
If $Z \subsetneq X$ is the set of all points $x \in X\left(\overline{\KK}\right)$ which satisfy the conditions that
\begin{itemize}
\item{$
\sum_{v \in S} \sum_{i=0}^r \lambda_{s_{v,i},v}(x) > (\mathrm{c}(d,L) + \delta) h_L(x) + \mathrm{O}(1) $; and }
\item{$
[\KK(x) : \KK] \leq d$,
}
\end{itemize}
then there exist finitely many proper linear sections $\Lambda_1,\dots,\Lambda_h$ in $X_{\overline{\KK}}$ each having field of definition with degree at most $d$ over $\KK$ and such that $Z$ is contained in the union $\Lambda_1 \bigcup \hdots \bigcup \Lambda_h$.
\end{theorem}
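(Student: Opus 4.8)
The plan is to deduce the conclusion from Conjecture \ref{Points:bounded:degree} itself, by transporting all of the data through the rational map attached to the complete linear system $|L|$. Fix a $\KK$-basis $e_0,\dots,e_r$ of $V = \H^0(X,L)$, where $r = h^0(X,L) - 1$ (we may assume $r \geq 1$, the case $r = 0$ being trivial), and let $\phi \colon X \to \PP^r_\KK$ be the resulting rational map, which is a morphism off $\operatorname{Bs}(L)$. Following the Remark after Definition \ref{linear:sections} (compare \cite{Ru:Vojta:2016}), pass to a proper birational model $\pi \colon X' \to X$, defined over $\KK$, obtained by blowing up the base ideal of $|L|$, so that $\pi^* L = M + E$ with $E \geq 0$ the fixed part, $|M|$ base point free, $\operatorname{Supp}(\pi(E)) \subseteq \operatorname{Bs}(L)$, and $h^0(X',M) = h^0(X,L) = r+1$; write $\psi \colon X' \to \PP^r_\KK$ for the morphism defined by $|M|$, so $\psi^*\Osh_{\PP^r}(1) = M$ and $\psi$ agrees with $\phi\circ\pi$ wherever defined. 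I claim the theorem holds with $\mathrm{c}(d,L) := \max\!\left(\mathrm{c}(r,d),\, r+1\right)$, where $\mathrm{c}(r,d)$ is the constant of Conjecture \ref{Points:bounded:degree}; since that conjecture plainly remains valid after enlarging its constant (the extra points of bounded height being finite in number, by Northcott's theorem, and so absorbed into zero-dimensional linear subspaces of degree at most $d$), this choice is legitimate and depends only on $d$ and $L$ --- not on $\FF$, $S$, or the chosen sections. When $L$ is very ample, $X' = X$, $E = 0$, and $\psi = \phi$ is the closed immersion of Definition \ref{linear:sections}.

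Next I set up the dictionary between the two sides. Writing each $s_{v,i} \in V_\FF = V \otimes_\KK \FF$ in the fixed basis produces linear forms $\ell_{v,i}(x) \in \FF[x_0,\dots,x_r] \subseteq \overline{\KK}[x_0,\dots,x_r]$, and linear independence of $s_{v,0},\dots,s_{v,r}$ in $V_\FF$ is exactly that of $\ell_{v,0},\dots,\ell_{v,r}$, as required in Conjecture \ref{Points:bounded:degree}. Letting $t_{v,i}$ be the section of $M$ with $\pi^* s_{v,i} = t_{v,i}\cdot s_E$ (for $s_E$ cutting out $E$), one has $\operatorname{div}(t_{v,i}) = \psi^*\{\ell_{v,i} = 0\}$, so functoriality of heights and of local Weil functions, in the birational formalism of \cite[Chapter 2]{Bombieri:Gubler}, \cite{Ru:Vojta:2016} and \cite{Grieve:Divisorial:Instab:Vojta}, gives for $x' \in X'(\overline{\KK})$ off the relevant supports, with $x = \pi(x')$ and $y = \psi(x') = \phi(x)$,
$$ \lambda_{s_{v,i},v}(x) = \lambda_{\ell_{v,i},v}(y) + \lambda_{E,v}(x') + \mathrm{O}(1) , \qquad h_L(x) = h_{\Osh_{\PP^r_{\overline{\KK}}}(1)}(y) + h_E(x') + \mathrm{O}(1) , $$
where the presentation of $E$ is chosen with $\lambda_{E,v} \geq 0$ throughout; hence $0 \leq \sum_{v \in S}\lambda_{E,v}(x') \leq h_E(x') + \mathrm{O}(1)$. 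Since $\phi$ is defined over $\KK$, we have $\KK(y) \subseteq \KK(x)$, so $[\KK(x):\KK] \leq d$ forces $[\KK(y):\KK] \leq d$.

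To close the argument, take $x \in Z$; as the proximity functions in the statement are defined off $\operatorname{Bs}(L)$ and off the supports of the $\operatorname{div}(s_{v,i})$, such $x$ has a unique preimage $x' \in X'$, and we set $y = \phi(x)$. Substituting the two displayed identities into the inequality defining $Z$ and collecting terms, the total $E$-contribution is $(r+1)\sum_{v\in S}\lambda_{E,v}(x') - (\mathrm{c}(d,L)+\delta)\,h_E(x')$, which by $\mathrm{c}(d,L) \geq r+1$ and $0 \leq \sum_{v\in S}\lambda_{E,v}(x') \leq h_E(x') + \mathrm{O}(1)$ is bounded above; the defining inequality of $Z$ therefore yields
$$ \sum_{v \in S}\sum_{i=0}^r \lambda_{\ell_{v,i},v}(y) > \left(\mathrm{c}(d,L)+\delta\right) h_{\Osh_{\PP^r_{\overline{\KK}}}(1)}(y) + \mathrm{O}(1) , \qquad [\KK(y):\KK] \leq d . $$
Thus $y$ lies in the exceptional set attached by Conjecture \ref{Points:bounded:degree} to the forms $\ell_{v,i}$ (note $\mathrm{c}(d,L) \geq \mathrm{c}(r,d)$) and the parameter $\delta$, so there exist finitely many proper linear subspaces $\Lambda_1',\dots,\Lambda_h' \subseteq \PP^r_{\overline{\KK}}$, each with field of definition of degree at most $d$ over $\KK$, whose union contains all such $y$. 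Setting $\Lambda_k := \pi\!\left(\psi^{-1}(\Lambda_k')\right)$ --- which equals $X_{\overline{\KK}} \cap \Lambda_k'$ when $L$ is very ample --- each $\Lambda_k$ is, by Definition \ref{linear:sections} and the Remark, a linear section of $X$ with respect to $L$; it is defined over the field of definition of $\Lambda_k'$ (since $X$, $\pi$, $\psi$ are defined over $\KK$), hence of degree at most $d$ over $\KK$; and it is a proper subset of $X_{\overline{\KK}}$, since $\Lambda_k'$ is proper while the image of $\psi$ is nondegenerate in $\PP^r$, a linear relation on the latter being a linear relation among the basis vectors $e_j$. As $Z \subseteq \Lambda_1 \cup \dots \cup \Lambda_h$, this is the desired conclusion.

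I expect the main obstacle to be the passage through the fixed part $E$ when $L$ is big but not base point free: one must run the birational theory of local Weil functions on the model $X'$, verify the two comparison identities with the correct error control, and --- the crucial point --- observe that enlarging the constant to $\mathrm{c}(d,L) \geq r+1$ forces the $E$-correction term to have the favourable sign, so that membership in $Z$ is not lost when one descends to the inequality on $\PP^r$. The remaining ingredients --- turning sections into linearly independent forms, the $\mathrm{O}(1)$ bookkeeping, the degree inequality $[\KK(\phi(x)):\KK] \leq [\KK(x):\KK]$, and recognizing $\pi(\psi^{-1}(\Lambda_k'))$ as a proper linear section of the asserted field of definition --- are routine, the last requiring only mild care with fields of definition.
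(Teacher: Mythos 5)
Your proof is correct and follows essentially the same route as the paper's: transport the data through the morphism to $\PP^r$ given by the complete linear system, apply Conjecture \ref{Points:bounded:degree} to $(\PP^r_{\FF},\Osh(1))$, and pull the exceptional linear subspaces back to linear sections of $X$. The paper's own argument is terser, citing \cite{Autissier:2011}, \cite{Ru:Vojta:2016} and \cite{Grieve:2018:autissier} for the reduction from big $L$ to the base-point-free case; you usefully make this reduction explicit via the model $\pi\colon X'\to X$, the decomposition $\pi^*L = M + E$, and the concrete choice $\mathrm{c}(d,L) = \max\bigl(\mathrm{c}(r,d),\, r+1\bigr)$ to absorb the $E$-terms. (Two minor remarks: the appeal to Northcott when justifying enlargement of the constant is unnecessary --- enlarging the constant only shrinks the exceptional set, so the conclusion persists trivially --- and, as in the paper's own Remark \ref{linear:sections}, the map $\psi$ need only be a morphism, not an immersion, so the notion of ``linear section of $X'$'' as $\psi^{-1}(\Lambda')$ is the right one; your treatment is consistent with this.)
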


\begin{proof}
The reformulation of diophantine arithmetic inequalities for linear forms in polynomial rings into the language of very ample linear systems is now quite standard \cite{Autissier:2011}, \cite{Ru:Vojta:2016}.  Arguing as in \cite[Proposition 4.2]{Autissier:2011},  \cite[Theorem 2.10]{Ru:Vojta:2016} or \cite[Proposition 2.1]{Grieve:2018:autissier}, for example, we conclude that, without loss of generality, we may reduce to the case that $L$ is very ample.  One other key point is that the choice of sections allows for an embedding of $X_\FF = X \times \spec \FF$ into projective $r$ space $\PP^r_{\FF} = \PP^r_{\KK} \times \spec \FF$.   The result then follows by applying the traditional form of the Second Main Theorem, for points of bounded degree, as in Conjecture \ref{Points:bounded:degree} or \cite[Conjecture 5.1]{Schlickewei:2003}, to $(\PP^r_{\FF}, \Osh_{\PP^r_{\FF}}(1))$ combined with the behaviour of local Weil functions under pullback.  Note that it is important that the diophantine exceptional set 
is contained in a finite union of linear spaces.
\end{proof}

\section{Expected orders of vanishing}

The purpose of this section is to define our main measures for expected orders of vanishing.  We work over a fixed base number field $\KK$ and fix a finite extension of number fields $\FF / \KK$, with $\KK \subseteq \FF \subseteq \overline{\KK}$, for $\overline{\KK}$ some fixed algebraic closure of $\KK$.  

Let $L$ be a big line bundle on a geometrically irreducible projective variety $X$.  We  assume that $X$ and $L$ are both defined over $\KK$ and  respectively denote by $X_\FF$ and $L_\FF$ their base change from $\KK$ to $\FF$.  Given a nonzero effective Cartier divisor $D$ on $X$, defined over $\FF$, the main measure of positivity and expected orders of vanishing  for $L$ along $D$ can be defined as
\begin{equation}\label{arithmetic:general:thm:gamma}
\gamma(L,D) := \limsup_{m \to \infty} \frac{m h^0(X,mL) }{ \sum_{\ell \geq 1} h^0(X,mL - \ell D)  } \text{;}
\end{equation}
see \cite{Autissier:2011} and \cite{Ru:Vojta:2016}.

This quantity \eqref{arithmetic:general:thm:gamma} admits several equivalent descriptions (see for example \cite{Grieve:2018:autissier}, \cite{Grieve:toric:gcd:2019}, \cite{Grieve:chow:approx}).  Moreover, for the case that $X$ is a Fano variety, in particular when the anticanonical class $L = - \K_X$ is ample, then the condition that $\gamma(L,D) \leq 1$ is significant from the point of view of $\K$-instability for $(X,-\K_X)$ and Vojta's Main Conjecture for $\KK$-rational points \cite{Grieve:Divisorial:Instab:Vojta}.

Turning to points of bounded degree, by analogy with the inequalities of Theorem \ref{Schlickewei:linear:systems:points:bounded:degree} and their role in obtaining general arithmetic inequalities for points of bounded degree, we define
\begin{equation}\label{arithmetic:general:thm:gamma:bounded:degree}
\gamma(d,L,D) := \limsup_{m \to \infty} \frac{ m \mathrm{c}(d,mL) }{ \sum_{\ell \geq 1} h^0(X, mL - \ell D) } \text{.}
\end{equation}
Here, in \eqref{arithmetic:general:thm:gamma:bounded:degree}, we assume that the constant $\mathrm{c}(d,mL)$, for each $m \geq 0$, is the optimal constant
that is given by Theorem \ref{Schlickewei:linear:systems:points:bounded:degree}.   

We also put
\begin{equation}\label{arithmetic:general:thm:gamma:bounded:degree:eqn:prime}
\mathrm{b}(d,mL) := \frac{\mathrm{c}(d,mL)}{ h^0(X,mL) } \text{.}
\end{equation}
The idea behind making the auxiliary definition \eqref{arithmetic:general:thm:gamma:bounded:degree:eqn:prime} is that, in the limit, we expect that these quantities \eqref{arithmetic:general:thm:gamma} and \eqref{arithmetic:general:thm:gamma:bounded:degree} are related as
$$\gamma(d,L,D) = \mathrm{b}(d) \gamma(L,D)$$
for $\mathrm{b}(d)$
some new constant, which should depend on $X$ and $d$ (but not on $L$ nor $D$).  We do not make further comment about these considerations in what follows.

For use in the proof of Theorem \ref{Arithmetic:General:Theorem:Points:Bounded:Degree}, we make explicit mention of a slight variant to \cite[Proposition 4.18]{Ru:Vojta:2016}.  Our formulation here allows for finite extensions $\FF$ of the base number field $\KK$.  We include a short proof, following closely that of \cite[Proposition 4.18]{Ru:Vojta:2016}, for completeness.  Here, we argue as in \cite[Proposition 4.18]{Ru:Vojta:2016} but pass to the base change from $\KK$ to $\FF$.  Here and elsewhere, we assume some familiarity with the concept of \emph{birational divisor}.  We refer to \cite{Ru:Vojta:2016} for a more detailed discussion, which is suitable for our purposes here, on that topic.

\begin{lemma}[Compare with {\cite[Proposition 4.18]{Ru:Vojta:2016}}]\label{glb:lemma}
Let $L$ be a line bundle on a geometrically irreducible projective variety $X$ and defined over the base number field $\KK$.  Let $\FF / \KK$ be some finite extension field and let $E_1,\dots,E_\ell$ be a collection of effective Cartier divisors on $X$ and defined over $\FF$.  Let
$$
\mathbb{E} = \bigwedge_{i=1}^\ell E_i
$$  
be the greatest lower bound of the $E_j$, for $j = 1,\dots,\ell$, and let 
$$
0 \not = s \in \H^0(X_{\FF},L_{\FF})
$$
be some nonzero global section which lies in
the coherent subsheaf of $L_\FF$ which is generated by the sheaf $\sum_{j=1}^\ell \left(L_{\FF}- E_j\right)$.  Then
\begin{equation}\label{glb:lemma:eqn}
\operatorname{div}(s) \geq \bigwedge_{j=1}^\ell E_j \text{.}
\end{equation}
\end{lemma}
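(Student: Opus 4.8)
The plan is to derive \eqref{glb:lemma:eqn} from a single inclusion of subsheaves of $L_{\FF}$, after which the conclusion is immediate. First I would record two elementary facts, valid because $X$ is geometrically irreducible so that $X_{\FF}$ is again a variety on which $\operatorname{div}(\cdot)$ and the greatest lower bound operation $\bigwedge$ make sense (via the birational divisor formalism of \cite{Ru:Vojta:2016}): for effective divisors $A \leq B$ one has $\Osh_{X_{\FF}}(-B) \subseteq \Osh_{X_{\FF}}(-A)$ as subsheaves of $\Osh_{X_{\FF}}$; and a nonzero global section of $\Osh_{X_{\FF}}(L_{\FF} - A)$, regarded as a rational section of $L_{\FF}$ through the canonical inclusion $\Osh_{X_{\FF}}(L_{\FF} - A) \hookrightarrow L_{\FF}$, has divisor of zeros $\geq A$. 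I would also note that the base change $\KK \hookrightarrow \FF$ is flat, so it preserves inclusions of coherent subsheaves, formation of global sections and effectivity of divisors; thus the argument of \cite[Proposition 4.18]{Ru:Vojta:2016} applies after passing from $X$ to $X_{\FF}$.

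The key step is the sheaf inclusion $\sum_{j=1}^{\ell} \Osh_{X_{\FF}}(-E_j) \subseteq \Osh_{X_{\FF}}(-\mathbb{E})$. Since $\mathbb{E} = \bigwedge_{j} E_j \leq E_j$ for every $j$, the first elementary fact gives $\Osh_{X_{\FF}}(-E_j) \subseteq \Osh_{X_{\FF}}(-\mathbb{E})$ for each $j$, and summing yields the claim; equivalently, on a common model $X' \to X_{\FF}$ on which all the $E_j$ and $\mathbb{E}$ are represented by honest effective divisors, one checks locally that if $f_j$ is a local equation of $E_j$ and $g = \gcd(f_1, \dots, f_\ell)$ a local equation of $\mathbb{E}$, then $(f_1, \dots, f_\ell) \subseteq (g)$. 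Tensoring with $L_{\FF}$ then gives $\sum_{j=1}^{\ell} \Osh_{X_{\FF}}(L_{\FF} - E_j) \subseteq \Osh_{X_{\FF}}(L_{\FF} - \mathbb{E})$ as subsheaves of $L_{\FF}$, and the left-hand side, being a finite sum of coherent subsheaves of $L_{\FF}$, is exactly the coherent subsheaf of $L_{\FF}$ in which $s$ is assumed to lie; hence $s \in \H^0(X_{\FF}, \Osh_{X_{\FF}}(L_{\FF} - \mathbb{E}))$.

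To conclude, viewing this $s$ as a rational section of $L_{\FF}$ via $\Osh_{X_{\FF}}(L_{\FF} - \mathbb{E}) \hookrightarrow L_{\FF}$, the second elementary fact gives $\operatorname{div}(s) \geq \mathbb{E}$, which is \eqref{glb:lemma:eqn}. I expect the only genuine obstacle to be bookkeeping rather than mathematics: fixing the birational divisor formalism so that $\bigwedge_j E_j$, the sheaves $\Osh_{X_{\FF}}(-E_j)$ and $\Osh_{X_{\FF}}(-\mathbb{E})$, and the inclusions above are unambiguous — possibly after first replacing $X_{\FF}$ by a suitable model (or its normalization) so that the $\gcd$ of local equations is meaningful — and checking that all of this descends correctly along the flat morphism $X_{\FF} \to X$. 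None of this is deep; it is essentially the content of \cite[Proposition 4.18]{Ru:Vojta:2016}, with the harmless addition of the extension $\FF / \KK$.
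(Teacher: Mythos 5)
Your proposal is correct and takes essentially the same approach as the paper: both pass to a model $\phi\colon X' \to X_{\FF}$ on which $\mathbb{E}$ is represented by a Cartier divisor $E$, use $E \leq \phi^*E_j$ to obtain the sheaf inclusion $\sum_j \phi^*\mathcal{O}(L_\FF - E_j) \subseteq \mathcal{O}(\phi^*L_\FF - E)$, place $\phi^*s$ in the right-hand side, and read off $\operatorname{div}(s) \geq \mathbb{E}$. The only cosmetic difference is that the paper pulls back to $X'$ from the outset, whereas you first write the inclusion on $X_\FF$ and then note, correctly, that one should pass to a model where $\mathbb{E}$ becomes Cartier.
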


\begin{proof}
Let $\phi \colon X' \rightarrow X_{\FF}$ be some model for which $\mathbb{E}$ has trace a Cartier divisor $E = \mathbb{E}_{X'}$.  Then
$$
\phi^*E_j - E \geq 0
$$
for all $j = 1,\dots, \ell$.  Thus, for each $j = 1,\dots,\ell$, the sheaf $\phi^*(L_\FF - E_j)$ is a subsheaf of $\phi^*L_\FF - E$.  In particular, it follows that
$$
\phi^* s \in \H^0(X',\phi^*L_{\FF} - E) 
$$
whence the desired inequality \eqref{glb:lemma:eqn}.
\end{proof}

\section{Proof of the arithmetic inequalities for points of bounded degree}\label{proof:Arithmetic:General:Theorem}

Here we prove Theorem \ref{Arithmetic:General:Theorem:Points:Bounded:Degree}.  In doing so, we summarize the most important aspects of the filtration method for measuring complexity of rational points.  The method of proof for the arithmetic general theorem, for points of bounded degree, is obtained from Conjecture \ref{Points:bounded:degree}  in a manner which is similar to the classical case.  Recall, that the origin for this method is attributed to Corvaja-Zannier, \cite{Corvaja:Zannier:2002}, and others.  Our conventions about this filtration construction are consistent with those of \cite{Ru:Vojta:2016}.  Note that they are slightly different than those of \cite{Grieve:2018:autissier}.

To prepare for the proof of Theorem \ref{Arithmetic:General:Theorem:Points:Bounded:Degree}, let $D_1,\dots,D_q$ be nonzero effective Cartier divisors intersecting properly on  $X$ and let 
$D = D_1 + \dots + D_q \text{.}$  
Fix a big line bundle $L$ on $X$ and put $n := \dim X$.  We allow that these divisors $D_i$ are defined over $\FF$ a finite extension of the base number field $\KK$ and contained in $\overline{\KK}$ a fixed algebraic closure of $\KK$.  We insist that $X$ is geometrically irreducible and that $L$ is defined over $\KK$.  That the divisors $D_1,\dots,D_q$ intersect properly means that for all subsets $I \subseteq \{1,\dots,q\}$ and all $x \in \bigcap_{i \in I} \operatorname{Supp} D_i$, the local defining equations for the $D_i$ form a regular sequence in the local ring $\Osh_{X_{\FF},x}$. Finally, recall that $\operatorname{Bs}(L)$, the \emph{base locus of $L$}, consists of those points of $X(\overline{\KK})$ at which all global sections of $L$ vanish.

In this context, we proceed with the proof of Theorem \ref{Arithmetic:General:Theorem:Points:Bounded:Degree}.  Again, the proof we give  is an adaptation of the approach given in \cite{Ru:Vojta:2016} and \cite{Grieve:2018:autissier}.  The success of our method is made possible because of Theorem \ref{Schlickewei:linear:systems:points:bounded:degree} and because of the definition given by \eqref{arithmetic:general:thm:gamma:bounded:degree}.

\begin{proof}[Proof of Theorem \ref{Arithmetic:General:Theorem:Points:Bounded:Degree}] 
  Recall, that $n = \dim X$.   Let $X_\FF$ and $L_\FF$ denote, respectively, the base changes of $X$ and $L$ from $\KK$ to $\FF$.  Hencefourth, as no confusion is likely, we denote $X_{\FF}$ and $L_{\FF}$ by $X$ and $L$.

Let $\epsilon > 0$.  Similar to the reduction step described in \cite[p.~985]{Ru:Vojta:2016}, it follows, from the Northcott property for big line bundles (compare with \cite[Theorem 2.4.9]{Bombieri:Gubler}), that there exist constants $A$ and $B$ together with a proper Zariski closed subset $Z \subsetneq X$, which is contained in a finite union of traces of linear subvarieties, each of which has field of definition at most $d$ over $\KK$, such that for all algebraic points
$$
x \in X(\overline{\KK}) \setminus \left( Z(\overline{\KK}) \bigcup \operatorname{Supp}(D_i)(\overline{\KK}) \bigcup \operatorname{Bs}(L)(\overline{\KK})\right)
$$
with 
$$[\KK(x) : \KK] \leq d \text{,}$$ 
if 
$$h_L(x) > B\text{,}$$ then
$$
\sum_{v \in S} \lambda_{D_i,v}(x) < A h_L(x)\text{,}
$$
for all $i = 1,\dots,q$.  Here, $\operatorname{Bs}(L)(\overline{\KK})$ denotes the base locus of $L$.  

In light of this, to prove the theorem, we may and do replace the given $\epsilon > 0$ by some smaller $\epsilon > 0$ and each of the quantities $\gamma(d,L,D_i)$, for $i = 1,\dots,q$, by some slightly larger rational numbers $\gamma_i$.  Put 
$$\beta_i := \gamma_i^{-1}.$$  
Then
$$
\beta_i < \beta(d, L, D_i) := \gamma(d,L,D_i)^{-1}
$$
for all $i = 1,\dots,q$.  Here and elsewhere, we have defined
$$
\gamma(d,L,D_i) := \limsup_{m \to \infty} \frac{ m \mathrm{c}(d,mL) }{ \sum_{\ell \geq 1} h^0(X,mL-\ell D_i) } \text{.}
$$

Having fixed such $\epsilon > 0$ and such rational numbers $\beta_i$, for all $i = 1,\dots,q$, our proof of Theorem \ref{Arithmetic:General:Theorem:Points:Bounded:Degree} is achieved by modifying the filtration construction of \cite{Ru:Vojta:2016}.  Again, our conventions in regards to this construction follows those of \cite{Ru:Vojta:2016} closely.  They differ slightly from those our previous work \cite{Grieve:2018:autissier}.  

In particular, for our purposes here, choose $\epsilon_1 > 0$ and positive integers $m$ and $b$ so that 
\begin{equation}\label{key:eqn1}
\left(1 + \frac{n}{b} \right) \max_{1 \leq i \leq q} \frac{ \beta_i m \mathrm{c}(d,mL) + m \epsilon_1}{\sum_{\ell \geq 1} h^0(X,m L -\ell D_i)} <  1 + \epsilon.
\end{equation}

Having now fixed $m$ and $b$, let
$$
\Sigma := \left\{ \sigma \subseteq \{1,\dots,q\} : \bigcap_{j \in \sigma} \operatorname{Supp} (D_j) \not = \emptyset  \right\} \text{.}
$$
For $\sigma \in \Sigma$, let
$$
\Delta_{\sigma} := \left\{ 
\mathbf{a} = (a_i) \in \prod_{i \in \sigma} \gamma_i \NN  : \sum_{i \in \sigma} \beta_i a_i = b 
\right\}
$$
and for each 
$\mathbf{a} \in \Delta_{\sigma}\text{,}$ 
define the ideal sheaf 
$$\Ish_{\mathbf{a}}(t) = \Ish(t) \subseteq \Osh_X$$ 
by the rule
$$
\Ish(t) := \sum_{\mathbf{b}} \Osh_X \left(-\sum_{i \in \sigma} b_i D_i \right) \text{.}
$$
Here, the sum is taken over all 
$\mathbf{b} \in \NN^{\# \sigma}$ 
which have the property that  
$$
\sum_{i \in \sigma} a_i b_i \geq t \text{.}
$$
Put 
$$
\mathcal{F}(\sigma;\mathbf{a})_t = \H^0(X,L^{\otimes m} \otimes \Ish(t)) \subseteq \H^0(X, mL).
$$

Now, for each nonzero section 
$$
0 \not = s \in \H^0(X, m L)
$$
define
$$
\mu_{\mathbf{a}}(s) = \sup \left\{t \in \RR_{\geq 0} : s \in \mathcal{F}(\sigma;\mathbf{a})_t  \right\} 
$$
and let $\mathcal{B}_{\sigma;\mathbf{a}}$ be a basis of $\H^0(X, m L)$ which is adapted to the filtration 
$$
\{\mathcal{F}(\sigma;\mathbf{a})_t \}_{t \in \RR_{\geq 0} } \text{.}
$$ 
In particular, by this is meant that
$$\mathcal{B}_{\sigma;\mathbf{a}} \bigcap \mathcal{F}(\sigma;\mathbf{a})_t$$ 
is a basis for $\mathcal{F}(\sigma;\mathbf{a})_t$ for all $t \in \RR_{\geq 0}$.

Set
\begin{equation}\label{filtration:expectation:Eqn}
F(\sigma,\mathbf{a}) := \frac{1}{h^0(X, mL)} \sum_{s \in \mathcal{B}_{\sigma; \mathbf{a}}} \mu_{\mathbf{a}}(s) \text{.}
\end{equation}
A key point is then to observe that
\begin{equation}\label{key:eqn:5:0}
F(\sigma;\mathbf{a})  \geq  
\min_{1 \leq i \leq q} \left( \frac{b  }{ \beta_i h^0(X, m L)} \sum_{\ell \geq 1} h^0(X, mL - \ell D_i)\right) \text{.}
\end{equation}
Here, we have used 
the fact that the divisors $D_1,\dots, D_q$ intersect properly. 
(See \cite[Proposition 6.7]{Ru:Vojta:2016} or \cite[Theorem 3.6]{Autissier:2011} for further details in regards to this second assertion.)

Hence, upon combining \eqref{filtration:expectation:Eqn} and \eqref{key:eqn:5:0}
\begin{equation}\label{eqn:37}
\sum_{s \in \mathcal{B}_{\sigma;\mathbf{a}}} \mu_{\mathbf{a}}(s) \geq \min_{1 \leq i \leq q} \left( \frac{b}{\beta_i} \sum_{\ell \geq 1} h^0(X, m L - \ell D_i)\right) \text{.}
\end{equation}
Note also that
\begin{equation}\label{eqn:5.2}
L^{\otimes m} \otimes \Ish(\mu_{\mathbf{a}}(s)) 
= 
\sum_{\mathbf{b} \in K} 
\left(
m L -\sum_{i \in \sigma} b_i D_i 
\right)
\end{equation}
where 
\begin{equation}\label{eqn:5.3}
K = K_{\sigma,\mathbf{a},s}
\end{equation}
is the set of minimal elements of the set
\begin{equation}\label{eqn:5.4}
\left \{ \mathbf{b} \in \NN^{\# \sigma} : \sum_{i \in \sigma} a_i b_i \geq  \mu_{\mathbf{a}}(s) \right \}
\end{equation}
relative to the product partial ordering on $\NN^{\# \sigma}$.

Working over some normal proper model of $X$, two other useful facts, for our purposes here, are the inequalities 
\begin{equation}\label{greatest:lower:bound}
\operatorname{div}(s) \geq \bigwedge_{\mathbf{b} \in K} \sum_{i \in \sigma} b_i D_i
\end{equation}
and
\begin{equation}\label{least:upper:bound}
\bigvee_{\substack{
\sigma \in \Sigma \\
\mathbf{a} \in \Delta_{\sigma}
}
} \operatorname{div} (\mathcal{B}_{\sigma;\mathbf{a}}) \geq \frac{b}{b+n} \left(\min_{1 \leq i \leq q} \sum_{\ell =1}^\infty \frac{h^0(X, m L - \ell D_i )}{ \beta_i } \right) \sum_{i = 1}^q \beta_i D_i \text{,}
\end{equation}
which were noted in \cite[Equations (41) and (42)]{Ru:Vojta:2016}.  (In equations \eqref{greatest:lower:bound} and \eqref{least:upper:bound}, the notations $\bigwedge$ and $\bigvee$ denote, respectively, the greatest lower bound and least upper bound.) 

Here, for the sake of completeness, we explain, following \cite[Proof of Lemma 6.8]{Ru:Vojta:2016}, the manner in which equation \eqref{greatest:lower:bound} can be used to establish equation \eqref{least:upper:bound}.

First, we mention that equation \eqref{greatest:lower:bound} is implied by Lemma \ref{glb:lemma}.  Indeed, because of \eqref{eqn:5.2}, combined with finiteness of the set \eqref{eqn:5.3}, the desired inequality \eqref{greatest:lower:bound} is achieved as a direct application of Lemma \ref{glb:lemma}.

Next, let us turn our attention to establishing the inequality \eqref{least:upper:bound}.  With this aim in mind, let
$$
\mathbb{D} := \bigvee_{\substack{ \sigma \in \Sigma 
\\ 
\mathbf{a} \in \Delta_{\sigma} }
}
\operatorname{div}(\mathcal{B}_{\sigma; \mathbf{a}})
$$
be the least upper bound of the divisors
$$
\operatorname{div}(\mathcal{B}_{\sigma;\mathbf{a}}) := 
\sum_{
s' \in \mathcal{B}_{\sigma;\mathbf{a}}
} 
\operatorname{div}(s') \text{.}
$$

Fix a normal projective model 
$$\phi \colon X' \rightarrow X$$ 
on which $\mathbb{D}$ is represented by a Cartier divisor 
$$D' = \mathbb{D}_{X'}\text{;}$$ 
let $E$ be a prime Weil divisor on $X'$.  Fix an arbitrary point 
$$x \in \phi(\operatorname{Supp}(E))$$ 
and put
$$
\sigma := \{i \in \{1,\dots,q\} : x \in \operatorname{Supp}(D_i) \} \text{.}
$$

For all 
$\mathbf{a} \in \Delta_{\sigma} \text{,}$
let $\nu',\nu_{\sigma,\mathbf{a}}$ and $\nu_i$, for $i = 1,\dots,q$, be the multiplicities of $E$ in $D'$, $\phi^* \operatorname{div}(\mathcal{B}_{\sigma,\mathbf{a}})$ and $D_i$, respectively.    
In particular, note that if 
$$\mathbf{a} \in \Delta_{\sigma}\text{,}$$ 
then
$$
\nu' \geq \nu_{\sigma,\mathbf{a}}.
$$
Put 
$$\nu = \sum_{i=1}^q \beta_i \nu_i\text{.}$$

The desired inequality \eqref{least:upper:bound} is then achieved after establishing existence of some 
$$\mathbf{a} \in \Delta_{\sigma}$$ 
which has the property that
\begin{equation}\label{eqn:40}
\nu_{\sigma,\mathbf{a}} \geq \frac{b}{b+n} \left( \min_{1 \leq i \leq q} \sum_{\ell = 1}^\infty \frac{h^0(X, m L - \ell D_i) }{\beta_i} \right) \nu \text{.}
\end{equation}
To establish the inequality \eqref{eqn:40}, observe first that if $\nu = 0$, then there is nothing to prove.  On the other hand, suppose that $\nu > 0$.  For all $i \in \sigma$, put
\begin{equation}\label{eqn:41}
t_i := \nu_i / \nu \text{.}
\end{equation}
Then, by construction, $\nu_i = 0$ for all $i \not \in \sigma$.  Thus
$$
\sum_{i \in \sigma} \beta_i \nu_i = \sum_{i=1}^q \beta_i \nu_i = \nu;
$$
whence
$$
\sum_{i \in \sigma} \beta_i t_i = 1.
$$

Now, by assumption, the divisors $D_1,\dots,D_q$ intersect properly.  In particular, they are in general position.  Among other things, it follows that
$$
\# \sigma \leq  n = \dim X.
$$
Furthermore 
$$
b \leq \sum_{i \in \sigma} \lfloor (b+n)\beta_i t_i \rfloor \leq b + n.
$$
Fix
$$
\mathbf{a} = (a_i) \in \Delta_{\sigma}
$$
which has the property that
\begin{equation}\label{eqn:42}
t_i \geq \frac{a_i}{b+n}
\end{equation}
for all $i \in \sigma$.

For each $s \in \mathcal{B}_{\sigma;\mathbf{a}}$, let $\nu_s$ be defined as 
$$
\nu_s := \operatorname{mult}_{\phi^*(s)} (E);
$$
in other words, $\nu_s$ is the multiplicity of $E$ in the divisor $\operatorname{div}(\phi^*(s))$.  

The above discussion implies, especially because of \eqref{greatest:lower:bound}, \eqref{eqn:41} and \eqref{eqn:42}, combined with the fact that for all $\mathbf{b} \in K$,
$$
\sum_{i \in \sigma} a_i b_i \geq  \mu_{\mathbf{a}}(s) \text{,}
$$
the following sequence of inequalities
\begin{align}\label{eqn:43}
\begin{split}
\nu_s & \geq \min_{\mathbf{b} \in K} \sum_{i \in \sigma} b_i \nu_i \cr
& = \left( \min_{\mathbf{b} \in K} \sum_{i \in \sigma} b_i t_i \right) \nu \cr
& \geq \left( \min_{\mathbf{b} \in K} \sum_{i \in \sigma} \frac{a_i b_i}{b+n} \right) \nu \cr
& \geq \left( \frac{1}{b+n} \right) \mu_{\mathbf{a}} (s) \nu  \text{.} 
\end{split}
\end{align}
In \eqref{eqn:43}, the set $K$ is as in \eqref{eqn:5.3}, namely, it is the minimal elements of the set \eqref{eqn:5.4}.

But now, combining the relations  \eqref{eqn:43} and \eqref{eqn:37}, it then follows that 
\begin{align}\label{eqn:44}
\begin{split}
\frac{\nu_{\sigma,\mathbf{a}}}{\nu} &
= \frac{1}{\nu} \left( \sum_{s \in \mathcal{B}_{\sigma;\mathbf{a}}} \nu_s \right) \cr
& \geq \frac{1}{b+n} \left( \sum_{s \in \mathcal{B}_{\sigma;\mathbf{a}}} \mu_{\mathbf{a}}(s) \right) \cr
& \geq \frac{b}{b+n} \left(  \min_{1 \leq i \leq q} \sum_{\ell \geq 1} \frac{h^0(X, mL - \ell D_i)}{\beta_i}  \right) \text{.}
\end{split}
\end{align} 
This last collection of inequalities, given in \eqref{eqn:44}, establish the inequality \eqref{eqn:40} and thus the inequality \eqref{least:upper:bound}.

Now, having established \eqref{least:upper:bound} and \eqref{greatest:lower:bound}, 
write
$$
\bigcup_{\sigma ; \mathbf{a}} \mathcal{B}_{\sigma;\mathbf{a}} = \mathcal{B}_1 \bigcup \hdots \bigcup \mathcal{B}_{T_1} = \{s_1,\dots,s_{T_2}\}.
$$
For each $i = 1, \dots, T_1$, let 
$$J_i \subseteq \{1,\dots,T_2\}$$ 
be the subset such that
$$\mathcal{B}_i = \{s_j : j \in J_i \} \text{.}$$  
Choose Weil functions, for each $v \in S$, $\lambda_{D_i,v} \left( \cdot \right)$, $\lambda_{\operatorname{div}\left(\mathcal{B}_i \right), v}\left( \cdot \right)$, for $i = 1,\dots, T_1$ and $\lambda_{s_j, v}\left( \cdot \right)$, for $j = 1,\dots, T_2$, for the divisors $D_i, \operatorname{div}(\mathcal{B}_i)$ and $s_j$ respectively.

Then for each $v \in S$, it holds true, using \eqref{least:upper:bound}, that
\begin{align}\label{key:eqn2}
\begin{split}
\frac{b}{b+n}\left( \min_{1 \leq i \leq q} \sum_{\ell \geq 1} \frac{h^0(X, m L - \ell D_i)}{\beta_i}\right) \sum_{i = 1}^q \beta_i \lambda_{D_i,v} \left( \cdot \right)   &  \leq 
 \max_{1 \leq i \leq T_1} \lambda_{\operatorname{div} \left( \mathcal{B}_i \right),v}(\cdot) + \mathrm{O}_v(1)
 \\ 
  & = \max_{1 \leq i \leq T_1} \sum_{j \in J_i} \lambda_{s_j,v}(\cdot) + \mathrm{O}_v(1).
  \end{split}
\end{align}

With $\epsilon_1$ in place of $\delta$, Theorem \ref{Schlickewei:linear:systems:points:bounded:degree} implies existence of a proper Zariski closed subset $Z \subsetneq X$, which is contained in a finite union of linear sections $\Lambda_1,\dots,\Lambda_h$ each of which has field of definition with degree at most equal to $d$ over $\KK$, so that if
$$x \in X(\overline{\KK}) \setminus \left( Z(\overline{\KK}) \bigcup \operatorname{Bs}(L)(\overline{\KK}) \right)$$ 
and 
$$[\KK(x):\KK]\leq d\text{,}$$ 
then
\begin{equation}\label{key:eqn3}
\sum_{v \in S} \max_J \sum_{j \in J} \lambda_{s_j,v}(x) \leq \left(\mathrm{c}(d, mL) + \epsilon_1 \right) h_{m L}(x) + \mathrm{O}(1).
\end{equation}
(The maximum is taken over all subsets 
$$J \subseteq \{1,\dots,T_2\}$$ 
for which the sections $s_j$, for $j \in J$, are linearly independent.)

It follows, upon combining Equations \eqref{key:eqn2} and \eqref{key:eqn3}, that
\begin{equation}\label{key:eqn4}
\sum_{i=1}^q \beta_i m_S(x,D_i)  \leq \left( 1 + \frac{n}{b} \right) \max_{1 \leq i \leq q} \left(  \frac{\beta_i(\mathrm{c}(d, m L) + \epsilon_1) }{ \sum_{\ell \geq 1} h^0(X, m L - \ell D_i )}\right) h_{mL}(x)+ \mathrm{O}(1)
\end{equation}
for all 
$$x \in X(\overline{\KK}) \setminus \left( Z(\overline{\KK}) \bigcup \operatorname{Bs}(L)(\overline{\KK}) \bigcup \operatorname{Supp}(D)(\overline{\KK}) \right)$$ 
with 
$$[\KK(x) : \KK] \leq d \text{.}$$  
(Here, we note that all $J_i$ occur among the $J$ in the above.)  
Finally, since 
$$
h_{m L}(x) = m h_{L}(x),$$ 
it holds true, by our choice of $\beta_i$ and by combining Equations \eqref{key:eqn1} and \eqref{key:eqn4}, that 
$$
\sum_{i=1}^q \gamma(d,L,D_i)^{-1} m_S(x,D_i) \leq \left(1 + \epsilon \right) h_{L}(x) + \mathrm{O}(1) 
$$
for all points
$$x \in X(\overline{\KK}) \setminus \left( Z(\overline{\KK}) \bigcup \operatorname{Bs}(L)(\overline{\KK}) \bigcup \operatorname{Supp}(D)(\overline{\KK})\right)$$ 
which have the property that 
$$[\KK(x):\KK] \leq d\text{.}$$
This completes our proof of Theorem \ref{Arithmetic:General:Theorem:Points:Bounded:Degree}.
\end{proof}

\ssubsection{Acknowledgements}
This work began while I was a postdoctoral fellow at Michigan State University.  It 
benefited from visits to CIRGET, Montreal, ICERM, Providence, and to the Institute of Mathematics Academia Sinica, Taipei, during the Summer of 2019.  It is my pleasure to thank Aaron Levin, Steven Lu, Mike Roth, Min Ru, Julie Wang and many other colleagues for their interest and conversations on related topics.  Finally, I thank anonymous referees for carefully reading earlier versions of this article and for providing comments and suggestions.

\providecommand{\bysame}{\leavevmode\hbox to3em{\hrulefill}\thinspace}
\providecommand{\MR}{\relax\ifhmode\unskip\space\fi MR }
\providecommand{\MRhref}[2]{%
  \href{http://www.ams.org/mathscinet-getitem?mr=#1}{#2}
}
\providecommand{\href}[2]{#2}

\end{document}